\newcommand{\Co}{{\mathbb C}}
\newcommand{\cc}{{\mathfrak C}}
\newtheorem{theorem}{Theorem}
\newtheorem{lemma}[theorem]{Lemma}
\newcommand{\der}[1]{\operatorname{\mathrm{Der}}{#1}}
\newcommand{\gdim}{\mathrm{gdim}}
\newcommand{\A}{\mathbf{A}}
\newcommand{\B}{\mathbf{B}}
\newcommand{\af}{\alpha}
\begin{document}
\noindent{\Large
The geometric classification of nilpotent commutative $\mathfrak{CD}$-algebras}

   \

   {\bf  Doston Jumaniyozov,
   Ivan   Kaygorodov \&
   Abror Khudoyberdiyev}

\

\

\

\noindent{\bf Abstract}: 
{\it We give a geometric  classification of complex
$5$-dimensional nilpotent commutative  $\mathfrak{CD}$-algebras. 
 The corresponding geometric variety has dimension $24$ and decomposes into $10$ irreducible components determined by the Zariski closures of 
 a two-parameter family of algebras,  
 three one-parameter families of algebras, 
 and $6$ rigid algebras.}

\

\noindent {\bf Keywords}: {\it Nilpotent algebra, Jordan algebra,
commutative $\mathfrak{CD}$-algebra, geometric classification, degeneration.}

\ 

    \noindent {\bf MSC2020}: 	17A30, 17D99, 17B30.

\section*{Introduction}

There are many results related to the algebraic and geometric 
classification
of low-dimensional algebras in the varieties of Jordan, Lie, Leibniz and 
Zinbiel algebras;
for algebraic classifications  see, for example, 
\cite{ack,          ikm19,  kkk18}; 
for geometric classifications and descriptions of degenerations see, for example, 
\cite{ack,   bb14, fkkv19,   jkk19, ikm19,    kkk18, kama, kk20g, kkl20, klp19,   maz79, S90}.
In this present paper, we give a geometric classification of nilpotent commutative $\mathfrak{CD}$-algebras.
 This is a new class of non-associative algebras introduced in \cite{ack,kps19}.
The idea of the definition of a $\mathfrak{CD}$-algebra comes from the following property of Jordan and Lie algebras: {\it the commutator of any pair of multiplication operators is a derivation}.
 This gives three identities of degree four,  which reduce to only one identity of degree four in the commutative or anticommutative case.
Commutative and anticommutative  $\mathfrak{CD}$-algebras are related to many interesting varieties of algebras.
 Thus, anticommutative  $\mathfrak{CD}$-algebras is a generalization of Lie algebras, 
 containing the intersection of Malcev and Sagle algebras as a proper subvariety.   Moreover, the following intersections of varieties coincide:
Malcev and Sagle algebras; 
Malcev and anticommutative  $\mathfrak{CD}$-algebras; 
Sagle and anticommutative  $\mathfrak{CD}$-algebras.
On the other hand, 
the variety of anticommutative  $\mathfrak{CD}$-algebras is a proper subvariety   of 
the varieties of binary Lie algebras 
and almost Lie algebras \cite{kz}.
The variety of anticommutative  $\mathfrak{CD}$-algebras coincides with the intersection of the varieties of binary Lie algebras and almost Lie algebras.
Commutative  $\mathfrak{CD}$-algebras is a generalization of Jordan algebras, 
which is a generalization of associative commutative algebras.
On the other hand, the variety of commutative  $\mathfrak{CD}$-algebras is also known as the variety of almost-Jordan algebras, which states in the bigger variety of generalized almost-Jordan algebras \cite{arenas,hl,labra}.
 The $n$-ary  version of commutative  $\mathfrak{CD}$-algebras was introduced in a recent paper by 
Kaygorodov, Pozhidaev and Saraiva \cite{kps19}. 

The variety of almost-Jordan algebras is the variety of commutative algebras, 
satisfying \[2((yx)x)x+yx^3=3(yx^2)x.\] 
This present identity appeared in a paper of Osborn \cite{os65},
during the study of identities of degree less than or equal to $4$ of non-associative algebras. The identity is a linearized form of the Jordan identity.
The systematic study of almost-Jordan algebras was initiated in the next paper of Osborn \cite{osborn65} and it was continued in some papers of Petersson \cite{petersson, petersson67}, Osborn \cite{osborn69}, and Sidorov \cite{Sidorov_1981}
(sometimes, it was  called as Lie triple algebras).
Hentzel and  Peresi proved that every semiprime almost-Jordan ring is Jordan \cite{peresi}.
After that, 
Labra and Correa
proved  that a finite-dimensional almost-Jordan right-nilalgebra is nilpotent \cite{cl09,cl09-2}.
Assosymmetric algebras under the symmetric product give almost-Jordan algebras  \cite{askar18}.

\medskip 

\paragraph{\bf Motivation and contextualization} 
Geometric properties of a variety of algebras have been an object of study since 1970's. Gabriel~\cite{gabriel} described the irreducible components of the variety of $4$-dimensional unital associative algebras. Mazzola~\cite{maz79} classified algebraically and geometrically the variety of unital associative algebras of dimension $5$.  Burde and Steinhoff~\cite{BC99} constructed the graphs of degenerations for the varieties of $3$-dimensional and $4$-dimensional Lie algebras over $\Co$. Grunewald and O'Halloran~\cite{GRH} calculated the degenerations for the nilpotent Lie algebras of dimension up to $5$. Seeley~\cite{S90} solved the same problem for $6$-dimensional complex nilpotent Lie algebras. 
Chouhy~\cite{chouhy} proved that, in the case of finite-dimensional associative algebras,
 the $N$-Koszul property is preserved under the degeneration relation.
Degenerations have also been used to study a level of complexity of an algebra (see~\cite{g93,wolf1,wolf2,  kh15}).
Given algebras ${\bf A}$ and ${\bf B}$ in the same variety, we write ${\bf A}\to {\bf B}$ and say that ${\bf A}$ {\it degenerates} to ${\bf B}$, or that ${\bf A}$ is a {\it deformation} of ${\bf B}$, if ${\bf B}$ is in the Zariski closure of the orbit of ${\bf A}$ (under the base-change action of the general linear group). The study of degenerations of algebras is very rich and closely related to deformation theory, in the sense of Gerstenhaber \cite{ger63}. It offers an insightful geometric perspective on the subject and has been the object of a lot of research.
In particular, there are many results concerning degenerations of algebras of small dimensions in a  variety defined by a set of identities.
One of the main problems of the {\it geometric classification} of a variety of algebras is a description of its irreducible components. In the case of finitely-many orbits (i.e., isomorphism classes), the irreducible components are determined by the rigid algebras --- algebras whose orbit closure is an irreducible component of the variety under consideration. 
The algebraic classification of complex $5$-dimensional nilpotent commutative $\mathfrak{CD}$-algebras was obtained in \cite{jkk20}, and in the present paper we continue the study of the variety by giving its geometric classification.


\section{The algebraic classification of complex $5$-dimensional nilpotent commutative $\mathfrak{CD}$-algebras}

The algebraic classifiction of $5$-dimensional nilpotent commutative $\mathfrak{CD}$-algebras has three steps:
the classification of all associative commutative algebras was given by Mazzola in 1979 \cite{maz79};
the next step is the classification of all non-associative Jordan algebras was given by Hegazi and Abdelwahab in 2016 \cite{ha16};
and the last step is the classification of all non-Jordan commutative $\mathfrak{CD}$-algebras was given by 
Jumaniyozov,  Kaygorodov and   Khudoyberdiyev in  2021 \cite{jkk20}. 
Let us give the list of algebras from the last part from this long classification:

\begin{theorem}\label{teor}
Let $\mathfrak{C}$ be a complex $5$-dimensional nilpotent commutative $\mathfrak{CD}$-algebra.
Then $\mathfrak{C}$ is a Jordan algebra or it is isomorphic to one algebra from the following list:
{\tiny 
\begin{longtable}{lllllllllll}

$\mathfrak{C}^{5}_{01}$&$:$& $e_1 e_1 = e_2$ & $e_2 e_2=e_3$\\
			            
$\mathfrak{C}^{5}_{02}(\alpha)$&$:$&  $e_1 e_1 = e_2$  & $e_1 e_2=e_3$& $e_1e_3= \alpha e_4$  & \multicolumn{2}{l}{$e_2e_2= (\alpha +1)e_4$} \\
			
$\mathfrak{C}^{5}_{03}$& $: $&   $e_1 e_1 = e_2$& $e_1e_3=e_4$& $e_2e_2=e_4$		\\

$\mathfrak{C}^{5}_{04}$& $: $&     $e_1 e_1 = e_2$ & $e_2e_2=e_4$& $e_3e_3=e_4$ \\
 
$\mathfrak{C}^5_{05}$ & $: $ & $e_1 e_2 = e_3$ & $e_3 e_3=e_4$	\\  
$\mathfrak{C}^5_{06}$ & $: $ & $e_1 e_1 = e_4$ & $e_1 e_2=e_3$ & $e_2e_2=e_4$& $e_3e_3=e_4$\\ 
$\mathfrak{C}^5_{07}$ & $: $ & $e_1 e_1 = e_4$ & $e_1 e_2=e_3$ & $e_3e_3=e_4$\\
$\mathfrak{C}^5_{08}$ & $: $ & $e_1e_1=e_2$ & $e_1e_3=e_4$ & $e_2e_2=e_5 $\\
$\mathfrak{C}^5_{09}$ & $: $& $e_1e_1=e_2$ & $e_1e_3=e_4$ & $e_2e_2=e_5$ & $e_3e_3=e_5 $\\
$\mathfrak{C}^5_{10}$ & $ :$ & $e_1e_1=e_2$ & $e_1e_2=e_4$ & $e_2e_2=e_5 $\\
$\mathfrak{C}^5_{11}$ & $ : $ & $e_1e_1=e_2$ & $e_1e_2=e_4$ & $e_1e_3=e_5$ & $e_2e_2=e_5 $\\
$\mathfrak{C}_{12}^5(\alpha)$&$:$& 
$e_1e_1=e_2$ & $e_1e_2=e_3$ &$e_1e_3=(\alpha+1) e_5$ &$e_2e_2= \alpha e_5$  &$e_2e_4= e_5$\\
$\mathfrak{C}_{13}^5(\alpha, \beta)$&$:$& 
$e_1e_1=e_2$ & $e_1e_2=e_3$ &$e_1e_3=(\alpha+1) e_5$ &$e_2e_2= \alpha e_5$ &$e_2e_4= \beta e_5$ &$e_4e_4= e_5$\\
$\mathfrak{C}^5_{14}$ & $ : $ & $e_1e_1=e_2$ & $e_2e_2=e_5$ & $e_3e_3=e_4 $\\
$\mathfrak{C}^5_{15}$ & $ : $ & $e_1e_1=e_2$ & $e_1e_3=e_5 $ & $ e_2e_2=e_5$ & $e_3e_3=e_4 $\\
$\mathfrak{C}_{16}^5(\alpha)$& $ : $ & $e_1e_1=e_2$ & $e_1e_2=e_4$ &$e_1e_4= (\af+1) e_5$& $e_2e_2=\af e_5$ &$e_3e_3=e_4$\\
$\mathfrak{C}^5_{17}$ & $ : $ & $e_1e_1=e_2$ & $e_1e_2=e_4$ & $e_1e_3=e_5$ & $e_2e_2=e_5$ & $e_3e_3=e_4 $\\
$\mathfrak{C}^5_{18}$ & $ : $ & $e_1e_1=e_2$ & $e_2e_2=e_5$ & $e_2e_3=e_4 $\\
$\mathfrak{C}^5_{19}$ & $ : $ & $e_1e_1=e_2$ & $e_2e_2=e_5$ & $e_2e_3=e_4$ & $e_3e_3=e_5 $\\
$\mathfrak{C}^5_{20}$ & $ : $ & $e_1e_1=e_2$ & $e_1e_3=e_5$ & $e_2e_2=e_5$ & $e_2e_3=e_4 $\\
$\mathfrak{C}^5_{21}$ & $ : $ & $e_1e_1=e_2$ & $e_1e_3=e_5$ & $e_2e_2=e_5$ & $e_2e_3=e_4$ & $e_3e_3=e_5 $\\
$\mathfrak{C}^5_{22}$ & $ : $ & $e_1e_1=e_2$ & $e_1e_2=e_5$ & $e_2e_2=e_5$ & $e_2e_3=e_4 $\\
$\mathfrak{C}^5_{23}$ & $ : $ & $e_1e_1=e_2$ & $e_1e_2=e_5$ & $e_2e_2=e_5$ & $e_2e_3=e_4$ & $e_3e_3=e_5 $\\
$\mathfrak{C}^5_{24}(\alpha)$ & $ : $ & $e_1e_1=e_2$ & $e_1e_2=e_5$ & $e_1e_3=e_5$ & $e_2e_2=e_5$ & $e_2e_3=e_4$ & $e_3e_3=\alpha e_5 $\\
$\mathfrak{C}_{25}^5$&$:$& $e_1e_1=e_2$ & $e_1e_2=e_3$ &$e_1e_3=e_4$ & $e_2e_2=e_5$\\
$\mathfrak{C}^5_{26}(\alpha,\beta)$ & $: $ & $e_1e_1=\alpha e_5$ & $e_1e_2=e_3$ & $e_2e_2=\beta e_5$ & $e_1e_3=e_4+e_5$ & $e_2e_3=e_4$ & $e_3e_3=e_5 $\\
$\mathfrak{C}^5_{27}(\alpha)$ & $ : $ & $e_1e_1=\alpha e_5$ & $e_1e_2=e_3$ & $e_2e_2=e_5$ & $e_1e_3=e_4$ & $e_2e_3=e_4$ & $e_3e_3=e_5 $\\
$\mathfrak{C}^5_{28}$ & $ : $ & $e_1e_1=e_5$ & $e_1e_2=e_3$ & $e_1e_3=e_4$ & $e_2e_3=e_4$ & $e_3e_3=e_5 $\\
$\mathfrak{C}^5_{29}$ & $ : $ & $e_1e_2=e_3$ & $e_1e_3=e_4$ & $e_2e_3=e_4$ & $e_3e_3=e_5 $\\
$\mathfrak{C}^5_{30}(\alpha)$ & $ : $ & $e_1e_1=e_4+\alpha e_5$ & $e_1e_2=e_3$ & $e_2e_2=e_5$ & $e_2e_3=e_4$ & $e_3e_3=e_5 $\\
$\mathfrak{C}^5_{31}$ & $ : $ & $e_1e_1=e_4+e_5$ & $e_1e_2=e_3$ & $e_2e_3=e_4$ & $e_3e_3=e_5 $\\
$\mathfrak{C}^5_{32}$ & $ : $ & $e_1e_1=e_4$ & $e_1e_2=e_3$ & $e_2e_3=e_4$ & $e_3e_3=e_5 $\\
$\mathfrak{C}^5_{33}$ & $ : $ & $e_1e_1=e_5$ & $e_1e_2=e_3$ & $e_2e_2=e_5$ & $e_2e_3=e_4$ & $e_3e_3=e_5 $\\
$\mathfrak{C}^5_{34}$ & $ : $ & $e_1e_1=e_5$ & $e_1e_2=e_3$ & $e_2e_3=e_4$ & $e_3e_3=e_5 $\\
$\mathfrak{C}^5_{35}$ & $ :$ & $e_1e_2=e_3$ & $e_2e_2=e_5$ & $e_2e_3=e_4$ & $e_3e_3=e_5 $\\
$\mathfrak{C}^5_{36}$ & $ :$ & $e_1e_2=e_3$ & $e_2e_3=e_4$ & $e_3e_3=e_5 $\\
$\mathfrak{C}^5_{37}$ & $ :$ & $e_1e_1=e_4+e_5 $ & $e_1e_2=e_3$ & $e_2e_2=e_4$ & $e_3e_3=e_5 $\\
$\mathfrak{C}^5_{38}$ & $ :$ & $e_1e_1=e_4$ & $e_1e_2=e_3$ & $e_2e_2=e_4$ & $e_3e_3=e_5 $\\
$\mathfrak{C}^5_{39}$ & $ :$ & $e_1e_1=e_5$ & $e_1e_2=e_3$ & $e_2e_2=e_4$ & $e_3e_3=e_5 $\\
$\mathfrak{C}^5_{40}$ & $ : $ & $e_1e_2=e_3$ & $e_2e_2=e_4$ & $e_3e_3=e_5 $ \\
$\mathfrak{C}_{41}^5$& $ : $ & $e_1e_1=e_2$ & $e_2e_2=e_5$ &$e_3e_4=e_5$\\
$\mathfrak{C}_{42}^5$& $ : $ & $e_1e_1=e_2$ & $e_1e_3=e_5$ &$e_2e_2=e_5$ &$e_4e_4= e_5$\\
$\mathfrak{C}_{43}^5$& $ : $ & $e_1e_1=e_5$ &  $e_1e_2=e_3$ &$e_2e_4=e_5$ & $e_3e_3=e_5$\\
$\mathfrak{C}_{44}^5$& $ : $ &  $e_1e_1=e_5$ & $e_1e_2=e_3$ & $e_2e_2=e_5$ & $e_3e_3=e_5$& $e_4e_4=e_5$\\
$\mathfrak{C}_{45}^5$& $ : $ & $e_1e_1=e_5$ & $e_1e_2=e_3$ & $e_3e_3=e_5$ & $e_4e_4=e_5$\\
$\mathfrak{C}_{46}^5$& $ : $ & $e_1e_2=e_3$ & $e_1e_4=e_5$ & $e_2e_4=e_5$ & $e_3e_3=e_5$\\
$\mathfrak{C}_{47}^5$& $ : $ & $e_1e_2=e_3$ & $e_2e_4=e_5$ & $e_3e_3=e_5$ \\
$\mathfrak{C}_{48}^5$& $ : $ & $e_1e_2=e_3$ & $e_3e_3=e_5$ & $e_4e_4=e_5$\\
 $\mathfrak{C}_{49}^5(\af)$& $ : $ & $e_1e_1=e_3$ & $e_1e_2=e_5$ &$e_2e_2=e_4$ & $e_3e_3=\af e_5$ & $e_3e_4=e_5$& $e_4e_4=e_5$ \\
 $\mathfrak{C}_{50}^5$& $ : $ & $e_1e_1=e_3$  & $e_1e_2=e_5$ &$e_2e_2=e_4$ & $e_3e_3=e_5$ & $e_4e_4=e_5$\\
 $\mathfrak{C}_{51}^5$& $ : $ & $e_1e_1=e_3$  & $e_1e_2=e_5$ &$e_2e_2=e_4$ & $e_3e_4=e_5$\\
 $\mathfrak{C}_{52}^5(\af)$& $ : $ & $e_1e_1=e_3$  & $e_1e_3=\af e_5$ &$e_2e_2=e_4$ &$e_2e_3=e_5$ &$e_3e_3=e_5$
 &$e_3e_4=e_5$ &$e_4e_4=e_5$\\
 $\mathfrak{C}_{53}^5$& $ : $ & $e_1e_1=e_3$  & $e_1e_3=e_5$ &$e_2e_2=e_4$ & $e_2e_3=e_5$ & $e_4e_4=e_5$\\
 $\mathfrak{C}_{54}^5$& $ : $ & $e_1e_1=e_3$  & $e_1e_3=e_5$ &$e_2e_2=e_4$ & $e_4e_4=e_5$\\
 $\mathfrak{C}_{55}^5$& $ : $ & $e_1e_1=e_3$  & $e_2e_2=e_4$ &$e_2e_3=e_5$ & $e_4e_4=e_5$\\
 $\mathfrak{C}_{56}^5(\af)$& $ : $ & $e_1e_1=e_3$  &$e_2e_2=e_4$ & $e_3e_3=\alpha e_5$ & $e_3e_4=e_5$ & $e_4e_4=e_5$\\
 $\mathfrak{C}_{57}^5$& $ : $ & $e_1e_1=e_3$  &$e_2e_2=e_4$ & $e_3e_3=e_5$ & $e_4e_4=e_5$\\
 $\mathfrak{C}_{58}^5$& $ : $ & $e_1e_1=e_3$  &$e_2e_2=e_4$ & $e_3e_4=e_5$\\
 $\mathfrak{C}_{59}^5$& $ : $ & $e_1e_1=e_3$ & $e_1e_2=e_4$ &$e_1e_3=e_5$ &$e_2e_2=e_5$ &$e_4e_4=e_5$\\
$\mathfrak{C}_{60}^5$& $ : $ & $e_1e_1=e_3$ & $e_1e_2=e_4$ &$e_1e_3=e_5$ &$e_2e_3=e_5$ &$e_4e_4=e_5$\\
$\mathfrak{C}_{61}^5$& $ : $ & $e_1e_1=e_3$ & $e_1e_2=e_4$ &$e_1e_3=e_5$ &$e_4e_4=e_5$\\ 
$\mathfrak{C}_{62}^5(\af)$& $ : $ & $e_1e_1=e_3$ & $e_1e_2=e_4$&$e_1e_4=e_5$ &$e_2e_2=\af e_5$ &$e_3e_3=e_5$\\ 
$\mathfrak{C}_{63}^5$& $ : $ & $e_1e_1=e_3$ & $e_1e_2=e_4$ &$e_2e_2=e_5$ &$e_3e_4=e_5$\\
$\mathfrak{C}_{64}^5$& $ : $ & $e_1e_1=e_3$ & $e_1e_2=e_4$ &$e_2e_3=e_5$ &$e_3e_3=e_5$ &$e_4e_4=e_5$\\ 
$\mathfrak{C}_{65}^5$& $ : $ & $e_1e_1=e_3$ & $e_1e_2=e_4$ &$e_2e_3=e_5$ &$e_4e_4=e_5$\\
$\mathfrak{C}_{66}^5$& $ : $ & $e_1e_1=e_3$ & $e_1e_2=e_4$ &$e_2e_4=e_5$ &$e_3e_3=e_5$\\ 
$\mathfrak{C}_{67}^5$& $ : $ & $e_1e_1=e_3$ & $e_1e_2=e_4$ &$e_3e_3=e_5$ &$e_4e_4=e_5$\\ 
$\mathfrak{C}_{68}^5$& $ : $ &   $e_1e_1=e_3$ & $e_1e_2=e_4$ &$e_3e_4=e_5$\\
${\mathfrak{C}}_{69}^{5}(\af)$ & $ : $ &   $e_1 e_1 = e_4$&$e_1e_2=\af e_5$ &$e_1e_3=e_5$ &$e_2e_2=e_5$
&$ e_2 e_3=e_4$ &$e_4e_4=e_5$\\
${\mathfrak{C}}_{70}^{5}$ & $ : $ &   $e_1 e_1 = e_4$&$e_1e_2=e_5$ &$e_1e_3=e_5$ &$ e_2 e_3=e_4$ &$e_4e_4=e_5$ \\
${\mathfrak{C}}_{71}^{5}$& $ : $ &   $e_1 e_1 = e_4$&$e_1e_2=e_5$&$ e_2 e_3=e_4$ &$e_4e_4=e_5$ \\
${\mathfrak{C}}_{72}^{5}$ & $ : $ &   $e_1 e_1 = e_4$&$e_2e_2=e_5$&$ e_2 e_3=e_4+e_5$ &$e_4e_4=e_5$ \\
${\mathfrak{C}}_{73}^{5}$ & $ : $ &   $e_1 e_1 = e_4$&$e_2e_2=e_5$&$ e_2 e_3=e_4$ &$e_4e_4=e_5$ \\
${\mathfrak{C}}_{74}^{5}$ & $ : $ &   $e_1 e_1 = e_4$&$ e_2 e_3=e_4+e_5$ &$e_4e_4=e_5$ \\
${\mathfrak{C}}_{75}^{5}$ & $ : $ &   $e_1 e_1 = e_4$&$ e_2 e_3=e_4$ &$e_4e_4=e_5$ \\
$\mathfrak{C}_{76}^5$& $ : $ & $e_1e_1=e_2$ & $e_1e_2=e_4$  &$e_1e_4= e_5$ &$e_2e_2= - 2 e_5$ &$e_3e_3=e_4+3e_5$\\
$\mathfrak{C}_{77}^5$& $ : $ & $e_1e_1=e_2$ & $e_1e_2=e_4$ &$e_1e_4= e_5$ &$e_2e_3=  e_5$ &$e_3e_3=e_4$\\

$\mathfrak{C}_{78 }^5$& $ : $ & $e_1 e_1 = e_2$ & $e_1 e_2=e_3$ & $e_1e_3=e_4$ &$e_1e_4=e_5$&$e_2e_2=e_4+e_5$ &$e_2e_3=e_5$\\ 

$\mathfrak{C}_{79}^5 $& $ : $ & 
$e_1 e_1 = e_2$& 
$e_1 e_2=e_3$&
$e_1e_3=  e_4$& 
$e_1e_4=  e_5$& 
$e_2e_2= 2 e_4+e_5$&
$e_2e_3=  4 e_5$ \\

$\mathfrak{C}_{80}^5(\af)$& $ : $ & 
$e_1 e_1 = e_2$& 
$e_1 e_2=e_3$&
$e_1e_3= \alpha e_4$& 
$e_1 e_4=e_5$&
$e_2e_2= (\alpha +1)e_4$&
\multicolumn{2}{l}{$e_2 e_3=(\af+3)e_5$}\\

$\mathfrak{C}_{81}^5 $& $ : $ & 
$e_1 e_1 = e_2$& 
$e_1 e_2=e_3$&
$e_1e_3=  e_4$& 
$e_2e_2= 2 e_4$&
$e_2e_4=   e_5$ \\
		\end{longtable}}

All algebras from the present list are non-isomorphic, excepting		
\begin{longtable}{lcccccr}
$\mathfrak{C}_{13}^5(\alpha, \beta) \cong         
 \mathfrak{C}_{13}^5(\alpha,-\beta)$ & \ & $\mathfrak{C}^5_{26}(\alpha,\beta) \cong  \mathfrak{C}^5_{26}(\beta, \alpha)$ & \ & $\mathfrak{C}^5_{27}(\alpha) \cong  \mathfrak{C}^5_{27}(\frac 1 {\alpha})$ & \ &
${\mathfrak{C}}_{69}^{5}(\af) \cong  {\mathfrak{C}}_{69}^{5}(\sqrt[3]{1}\af)$ 
\end{longtable}
  \end{theorem}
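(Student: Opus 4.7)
My plan is to apply the Skjelbred--Sund method of central extensions, adapted to the variety of commutative $\mathfrak{CD}$-algebras. Since every nilpotent algebra of dimension $n$ with non-trivial annihilator is isomorphic to a one-dimensional central extension of a nilpotent algebra of dimension $n-1$, the classification in dimension $5$ reduces to a cohomological computation over the already-known classifications in dimensions $\leq 4$. The algebras whose annihilator is not contained in the square, i.e.\ split extensions with a one-dimensional ideal factor, can then be read off directly from the dimension $4$ list.

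First I would tabulate the nilpotent commutative $\mathfrak{CD}$-algebras in dimensions $\leq 4$. For each such algebra $\mathfrak{A}$ of dimension $4$, I would compute the space of symmetric $2$-cocycles $Z^2(\mathfrak{A},\mathbb{C})$ satisfying the linearized $\mathfrak{CD}$-identity, the subspace of coboundaries $B^2(\mathfrak{A},\mathbb{C})$, and the action of $\mathrm{Aut}(\mathfrak{A})$ on the quotient $H^2(\mathfrak{A},\mathbb{C})$. The orbits of one-dimensional subspaces $\langle \theta\rangle \subset H^2(\mathfrak{A},\mathbb{C})$ whose radical meets the annihilator of $\mathfrak{A}$ trivially correspond bijectively to isomorphism classes of $5$-dimensional commutative $\mathfrak{CD}$-algebras whose annihilator is contained in the square. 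I would then discard the outputs that satisfy the Jordan identity, since those are covered by Mazzola's and by Hegazi--Abdelwahab's classifications. The remaining non-Jordan $5$-dimensional algebras are expected to form exactly the list $\mathfrak{C}^5_{01},\ldots,\mathfrak{C}^5_{81}$.

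To certify that the tabulated list is correct, for each of the $81$ families I would verify by direct computation on the structure constants that the multiplication table satisfies $2((yx)x)x+yx^3=3(yx^2)x$ but fails the Jordan identity on at least one triple of basis vectors. The pairwise non-isomorphism across families is then established by a battery of invariants preserved under change of basis: the dimensions of the powers $\mathfrak{A}^k$, of the annihilator, of $\mathrm{Der}(\mathfrak{A})$, the nilpotency index, the Jordan normal form of left-multiplication by a generic element, and the rank of the bilinear map considered as a tensor modulo the annihilator. Sorting the $81$ families by these invariants separates them into small clusters within which a direct linear-algebra comparison finishes the job.

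The main obstacle will be the parametric families $\mathfrak{C}_{02}^5,\mathfrak{C}_{12}^5,\mathfrak{C}_{13}^5,\mathfrak{C}_{16}^5,\mathfrak{C}_{24}^5,\mathfrak{C}_{26}^5,\mathfrak{C}_{27}^5,\mathfrak{C}_{30}^5,\mathfrak{C}_{49}^5,\mathfrak{C}_{52}^5,\mathfrak{C}_{56}^5,\mathfrak{C}_{62}^5,\mathfrak{C}_{69}^5,\mathfrak{C}_{80}^5$, since for these one must decide precisely when two parameter values yield isomorphic algebras. For each such family I would write down a generic invertible change of basis compatible with the filtration by the lower central series, impose that the transformed multiplication table has the same shape with new parameters, and solve the resulting polynomial system in the coefficients of the change of basis and in the parameters. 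This computation is expected to yield exactly the identifications $\mathfrak{C}_{13}^5(\alpha,\beta)\cong\mathfrak{C}_{13}^5(\alpha,-\beta)$, $\mathfrak{C}_{26}^5(\alpha,\beta)\cong\mathfrak{C}_{26}^5(\beta,\alpha)$, $\mathfrak{C}_{27}^5(\alpha)\cong\mathfrak{C}_{27}^5(1/\alpha)$ and $\mathfrak{C}_{69}^5(\alpha)\cong\mathfrak{C}_{69}^5(\sqrt[3]{1}\,\alpha)$ listed in the statement, and no others. The heaviest bookkeeping will arise for the algebras of maximal nilpotency index such as $\mathfrak{C}^5_{78},\mathfrak{C}^5_{79},\mathfrak{C}^5_{80}(\alpha),\mathfrak{C}^5_{81}$, where the cocycle spaces and automorphism groups are both large, and most of the proof effort will be concentrated on tracking the orbit structure carefully enough to avoid collapsing distinct classes or, conversely, spuriously duplicating a single class.
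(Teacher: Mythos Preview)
The paper does not actually prove this theorem: Theorem~\ref{teor} is quoted verbatim from the companion paper \cite{jkk20}, and the present paper only uses the list as input for the geometric classification. So there is no ``paper's own proof'' to compare against beyond the citation.

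That said, your outline is the right one and is precisely the method carried out in \cite{jkk20}: the Skjelbred--Sund central-extension procedure, computing $H^2_{\mathfrak{CD}}(\mathfrak{A},\mathbb{C})$ for each smaller nilpotent commutative $\mathfrak{CD}$-algebra $\mathfrak{A}$, and then classifying $\mathrm{Aut}(\mathfrak{A})$-orbits of suitable subspaces. One point you should tighten: restricting to one-dimensional central extensions of $4$-dimensional algebras (with the non-split condition $\mathrm{Rad}(\theta)\cap\mathrm{Ann}(\mathfrak{A})=0$) only produces the $5$-dimensional algebras whose annihilator is exactly one-dimensional. Several algebras on the list, e.g.\ $\mathfrak{C}^5_{08}$ or $\mathfrak{C}^5_{25}$, have a two-dimensional annihilator contained in the square, and these arise only as non-split two-dimensional central extensions of $3$-dimensional algebras. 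So you must run the orbit analysis on Grassmannians $G_s(H^2(\mathfrak{A},\mathbb{C}))$ for all $s\ge 1$ and all $\mathfrak{A}$ of dimension $5-s$, not just $s=1$. With that amendment your plan matches the argument of \cite{jkk20}.
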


\section{The geometric classification of complex $5$-dimensional nilpotent commutative $\mathfrak{CD}$-algebras}

\subsection{Degenerations of algebras}
Given an $n$-dimensional vector space ${\bf V}$, the set ${\rm Hom}({\bf V} \otimes {\bf V},{\bf V}) \cong {\bf V}^* \otimes {\bf V}^* \otimes {\bf V}$ 
is a vector space of dimension $n^3$. This space inherits the structure of the affine variety $\mathbb{C}^{n^3}.$ 
Indeed, let us fix a basis $e_1,\dots,e_n$ of ${\bf V}$. Then any $\mu\in {\rm Hom}({\bf V} \otimes {\bf V},{\bf V})$ is determined by $n^3$ structure constants $c_{i,j}^k\in\mathbb{C}$ such that
$\mu(e_i\otimes e_j)=\sum_{k=1}^nc_{i,j}^ke_k$. A subset of ${\rm Hom}({\bf V} \otimes {\bf V},{\bf V})$ is {\it Zariski-closed} if it can be defined by a set of polynomial equations in the variables $c_{i,j}^k$ ($1\le i,j,k\le n$).

The general linear group ${\rm GL}({\bf V})$ acts by conjugation on the variety ${\rm Hom}({\bf V} \otimes {\bf V},{\bf V})$ of all algebra structures on ${\bf V}$:
$$ (g * \mu )(x\otimes y) = g\mu(g^{-1}x\otimes g^{-1}y),$$ 
for $x,y\in {\bf V}$, $\mu\in {\rm Hom}({\bf V} \otimes {\bf V},{\bf V})$ and $g\in {\rm GL}({\bf V})$. Clearly, the ${\rm GL}({\bf V})$-orbits correspond to the isomorphism classes of algebras structures on ${\bf V}$. Let $T$ be a set of polynomial identities which is invariant under isomorphism. Then the subset $\mathbb{L}(T)\subset {\rm Hom}({\bf V} \otimes {\bf V},{\bf V})$ of the algebra structures on ${\bf V}$ which satisfy the identities in $T$ is ${\rm GL}({\bf V})$-invariant and Zariski-closed. It follows that $\mathbb{L}(T)$ decomposes into ${\rm GL}({\bf V})$-orbits. The ${\rm GL}({\bf V})$-orbit of $\mu\in\mathbb{L}(T)$ is denoted by $O(\mu)$ and its Zariski closure by $\overline{O(\mu)}$.

Let ${\bf A}$ and ${\bf B}$ be two $n$-dimensional algebras satisfying the identities from $T$ and $\mu,\lambda \in \mathbb{L}(T)$ represent ${\bf A}$ and ${\bf B}$ respectively.
We say that ${\bf A}$ {\it degenerates} to ${\bf B}$ and write ${\bf A}\to {\bf B}$ if $\lambda\in\overline{O(\mu)}$.
Note that in this case we have $\overline{O(\lambda)}\subset\overline{O(\mu)}$. Hence, the definition of a degeneration does not depend on the choice of $\mu$ and $\lambda$. If ${\bf A}\to {\bf B}$ and ${\bf A}\not\cong {\bf B}$, then ${\bf A}\to {\bf B}$ is called a {\it proper degeneration}. We write ${\bf A}\not\to {\bf B}$ if $\lambda\not\in\overline{O(\mu)}$ and call this a {\it non-degeneration}. Observe that the dimension of the subvariety $\overline{O(\mu)}$ equals $n^2-\dim\der({\bf A})$. Thus if ${\bf A}\to {\bf B}$ is a proper degeneration, then we must have $\dim\der({\bf A})>\dim\der({\bf B})$.

Let ${\bf A}$ be represented by $\mu\in\mathbb{L}(T)$. Then  ${\bf A}$ is  {\it rigid} in $\mathbb{L}(T)$ if $O(\mu)$ is an open subset of $\mathbb{L}(T)$.
Recall that a subset of a variety is called {\it irreducible} if it cannot be represented as a union of two non-trivial closed subsets. A maximal irreducible closed subset of a variety is called an {\it irreducible component}.
It is well known that any affine variety can be represented as a finite union of its irreducible components in a unique way.
The algebra ${\bf A}$ is rigid in $\mathbb{L}(T)$ if and only if $\overline{O(\mu)}$ is an irreducible component of $\mathbb{L}(T)$.

In the present work we use the methods applied to Lie algebras in \cite{GRH,GRH2}.
To prove 
degenerations, we will construct families of matrices parametrized by $t$. Namely, let ${\bf A}$ and ${\bf B}$ be two algebras represented by the structures $\mu$ and $\lambda$ from $\mathbb{L}(T)$, respectively. Let $e_1,\dots, e_n$ be a basis of ${\bf V}$ and $c_{i,j}^k$ ($1\le i,j,k\le n$) be the structure constants of $\lambda$ in this basis. If there exist $a_i^j(t)\in\mathbb{C}$ ($1\le i,j\le n$, $t\in\mathbb{C}^*$) such that the elements $E_i^t=\sum_{j=1}^na_i^j(t)e_j$ ($1\le i\le n$) form a basis of ${\bf V}$ for any $t\in\mathbb{C}^*$, and the structure constants $c_{i,j}^k(t)$ of $\mu$ in the basis $E_1^t,\dots, E_n^t$ satisfy $\lim\limits_{t\to 0}c_{i,j}^k(t)=c_{i,j}^k$, then ${\bf A}\to {\bf B}$. In this case  $E_1^t,\dots, E_n^t$ is called a {\it parametric basis} for ${\bf A}\to {\bf B}$.

To prove a non-degeneration ${\bf A}\not\to {\bf B}$ we will use the following lemma (see \cite{GRH}).

\begin{lemma}\label{main}
Let $\mathcal{B}$ be a Borel subgroup of ${\rm GL}({\bf V})$ and $\mathcal{R}\subset \mathbb{L}(T)$ be a $\mathcal{B}$-stable closed subset.
If ${\bf A} \to {\bf B}$ and ${\bf A}$ can be represented by $\mu\in\mathcal{R}$ then there is $\lambda\in \mathcal{R}$ that represents ${\bf B}$.
\end{lemma}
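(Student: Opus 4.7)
The plan is to reduce the lemma to the standard fact that, under the hypotheses, the $\mathrm{GL}({\bf V})$-saturation $\mathrm{GL}({\bf V})\cdot\mathcal{R}$ is already closed in $\mathbb{L}(T)$. Granting this, the rest is formal: since $\mu\in\mathcal{R}$, we have $O(\mu)\subset\mathrm{GL}({\bf V})\cdot\mathcal{R}$, and closedness gives $\overline{O(\mu)}\subset\mathrm{GL}({\bf V})\cdot\mathcal{R}$. The hypothesis ${\bf A}\to{\bf B}$ means $\lambda\in\overline{O(\mu)}$, so there exist $g\in\mathrm{GL}({\bf V})$ and $\nu\in\mathcal{R}$ with $\lambda=g*\nu$. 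Then $\nu$ lies in the same $\mathrm{GL}({\bf V})$-orbit as $\lambda$, so $\nu\cong\lambda$ as algebras and $\nu\in\mathcal{R}$ is the desired representative of ${\bf B}$.

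So the real content lies in proving that $\mathrm{GL}({\bf V})\cdot\mathcal{R}$ is closed. First I would form the associated fiber bundle $E=\mathrm{GL}({\bf V})\times^{\mathcal{B}}\mathcal{R}$, defined as the quotient of $\mathrm{GL}({\bf V})\times\mathcal{R}$ by the free $\mathcal{B}$-action $b\cdot(g,r)=(gb^{-1},b*r)$. The first projection descends to a locally trivial bundle $\pi\colon E\to\mathrm{GL}({\bf V})/\mathcal{B}$ with fiber $\mathcal{R}$, while the rule $(g,r)\mapsto g*r$ descends to a well-defined morphism $\varphi\colon E\to\mathbb{L}(T)$ whose image is precisely $\mathrm{GL}({\bf V})\cdot\mathcal{R}$. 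The flag variety $\mathrm{GL}({\bf V})/\mathcal{B}$ is projective, hence complete; since completeness is local on the base for fiber bundles with closed fiber inside an affine ambient space, the projection $E\to\mathrm{GL}({\bf V})/\mathcal{B}$ is proper, and composing with the trivially proper map to a point shows that $\varphi$ is a proper morphism into $\mathbb{L}(T)$. Therefore $\varphi(E)=\mathrm{GL}({\bf V})\cdot\mathcal{R}$ is closed.

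The main obstacle is really just setting up this bundle argument cleanly, since everything else is a one-line consequence. Concretely, one must verify that the $\mathcal{B}$-action on $\mathrm{GL}({\bf V})\times\mathcal{R}$ is free with a geometric quotient, that $\pi$ is genuinely a bundle (not merely a quotient map), and that properness propagates through the bundle structure. A more pedestrian alternative, if one wishes to avoid bundle-theoretic language, is to use the Borel fixed-point theorem directly: apply it to the action of $\mathcal{B}$ on the complete variety $\overline{O(\mu)}/\!/{\text{stuff}}$ or, more concretely, replace $\mathcal{R}$ by $\overline{O(\mu)}\cap(\mathrm{GL}({\bf V})\cdot\mathcal{R})$ and argue by the completeness of $\mathrm{GL}({\bf V})/\mathcal{B}$ exactly as above. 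In either form the proof is very short, and the lemma is essentially the one used by Grunewald and O'Halloran in \cite{GRH} for the Lie-algebra setting, so I would simply adapt their argument verbatim to the variety $\mathbb{L}(T)$ of commutative $\mathfrak{CD}$-algebra structures.
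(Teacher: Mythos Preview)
The paper does not give its own proof of this lemma; it simply quotes it from Grunewald--O'Halloran \cite{GRH} and uses it as a black box. Your outline is precisely the classical argument behind that citation and reaches the right conclusion, so in that sense it matches what the paper relies on.

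There is, however, one mis-stated step. You claim that the bundle projection $\pi\colon E\to\mathrm{GL}({\bf V})/\mathcal{B}$ is proper because the fibre $\mathcal{R}$ is ``closed inside an affine ambient space''. That is false: a locally trivial fibre bundle is proper over its base exactly when the fibre is complete, and a closed subvariety of affine space is affine, hence complete only if it is finite. In particular $E$ is almost never complete, so you cannot deduce properness of $\varphi$ by composing $\pi$ with the map to a point. The correct (and equally short) argument is that the map $[g,r]\mapsto([g],g*r)$ is a closed embedding of $E$ into $(\mathrm{GL}({\bf V})/\mathcal{B})\times\mathbb{L}(T)$, and the second projection from this product is proper because the flag variety $\mathrm{GL}({\bf V})/\mathcal{B}$ is complete. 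Thus $\varphi$ is the restriction of a proper morphism to a closed subscheme, hence proper, and its image $\mathrm{GL}({\bf V})\cdot\mathcal{R}$ is closed. With this one-line fix your deduction that any $\lambda\in\overline{O(\mu)}$ lies in $\mathrm{GL}({\bf V})\cdot\mathcal{R}$, and therefore has a representative $\nu\in\mathcal{R}$, goes through verbatim.
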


In particular, it follows from Lemma \ref{main} that ${\bf A}\not\to {\bf B}$, whenever $\dim({\bf A}^2)<\dim({\bf B}^2)$.

When the number of orbits under the action of ${\rm GL}({\bf V})$ on  $\mathbb{L}(T)$ is finite, the graph of primary degenerations gives the whole picture. In particular, the description of rigid algebras and irreducible components can be easily obtained. Since the variety of $5$-dimensional nilpotent commutative $\mathfrak{CD}$-algebras contains infinitely many non-isomorphic algebras, we have to fulfill some additional work. Let ${\bf A}(*):=\{{\bf A}(\alpha)\}_{\alpha\in I}$ be a family of algebras and ${\bf B}$ be another algebra. Suppose that, for $\alpha\in I$, ${\bf A}(\alpha)$ is represented by a structure $\mu(\alpha)\in\mathbb{L}(T)$ and ${\bf B}$ is represented by a structure $\lambda\in\mathbb{L}(T)$. Then by ${\bf A}(*)\to {\bf B}$ we mean $\lambda\in\overline{\cup\{O(\mu(\alpha))\}_{\alpha\in I}}$, and by ${\bf A}(*)\not\to {\bf B}$ we mean $\lambda\not\in\overline{\cup\{O(\mu(\alpha))\}_{\alpha\in I}}$.

Let ${\bf A}(*)$, ${\bf B}$, $\mu(\alpha)$ ($\alpha\in I$) and $\lambda$ be as above. To prove ${\bf A}(*)\to {\bf B}$, it is enough to construct a family of pairs $(f(t), g(t))$ parametrized by $t\in\mathbb{C}^*$, where $f(t)\in I$ and $g(t)=\left(a_i^j(t)\right)_{i,j}\in {\rm GL}({\bf V})$. Namely, let $e_1,\dots, e_n$ be a basis of ${\bf V}$ and $c_{i,j}^k$ ($1\le i,j,k\le n$) be the structure constants of $\lambda$ in this basis. If we construct $a_i^j:\mathbb{C}^*\to \mathbb{C}$ ($1\le i,j\le n$) and $f: \mathbb{C}^* \to I$ such that $E_i^t=\sum_{j=1}^na_i^j(t)e_j$ ($1\le i\le n$) form a basis of ${\bf V}$ for any  $t\in\mathbb{C}^*$, and the structure constants $c_{i,j}^k(t)$ of $\mu\big(f(t)\big)$ in the basis $E_1^t,\dots, E_n^t$ satisfy $\lim\limits_{t\to 0}c_{i,j}^k(t)=c_{i,j}^k$, then ${\bf A}(*)\to {\bf B}$. In this case, $E_1^t,\dots, E_n^t$ and $f(t)$ are called a {\it parametric basis} and a {\it parametric index} for ${\bf A}(*)\to {\bf B}$, respectively. In the construction of degenerations of this sort, we will write $\mu\big(f(t)\big)\to \lambda$, emphasizing that we are proving the assertion $\mu(*)\to\lambda$ using the parametric index $f(t)$.



\subsection{The geometric classification of $5$-dimensional nilpotent 
commutative $\mathfrak{CD}$-algebras}
The geometric classification of $5$-dimensional nilpotent 
commutative $\mathfrak{CD}$-algebras is based on some previous works:
namely, all irreducible components of $5$-dimensional nilpotent associative commutative algebras are given in \cite{maz79} and all degenerations between these algebras are
given in \cite{klp19};
  all irreducible components of $5$-dimensional nilpotent Jordan algebras were described in \cite{kama}.
In the proof of the present theorem we give all necessary arguments for the description of all irreducible components of the variety of
$5$-dimensional nilpotent  commutative $\mathfrak{CD}$-algebras.

\begin{theorem}\label{main-geo}
The variety of complex $5$-dimensional nilpotent commutative $\mathfrak{CD}$-algebras  is $24$-dimensional and it has $10$ irreducible components.
In particular, there are $6$ rigid algebras:  
non-Jordan algebras $\cc^5_{69}, \cc^5_{72}, \cc^5_{76}, \cc^5_{77}, \cc^5_{81}$
and Jordan algebra ${\mathcal J}_{21}.$
\end{theorem}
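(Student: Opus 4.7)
The plan is to combine Theorem~\ref{teor} with the known geometric classification of $5$-dimensional nilpotent Jordan algebras from~\cite{kama}, and then for every algebra and every parametric family in the combined list to decide whether its orbit closure contributes an irreducible component of $\mathbb{L}(T)$, where $T$ denotes the identities defining nilpotent commutative $\mathfrak{CD}$-algebras.

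First I would tabulate the derivations of every algebra on the list: since $\dim \overline{O(\mu)} = 25-\dim \der{\mathbf{A}}$, this gives the dimension of every orbit closure and an a priori upper bound of $24$ on $\dim \mathbb{L}(T)$. Identifying which orbits (and Zariski closures of parametric families) actually realize this bound pinpoints the candidate irreducible components, which should be the two-parameter family $\cc^5_{13}(\af,\bt)$, three one-parameter families, and the six rigid algebras $\cc^5_{69}, \cc^5_{72}, \cc^5_{76}, \cc^5_{77}, \cc^5_{81}$ and $\mathcal{J}_{21}$ listed in the statement.

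Next I would prove the positive half of the theorem: every algebra outside the distinguished set $\mathcal{S}$ of candidates degenerates from some element of $\mathcal{S}$. For each such pair this amounts to writing down an explicit parametric basis $E_i^t=\sum_j a_i^j(t)e_j$ (and, for a family, also a parametric index $f(t)\in I$) and checking that the transported structure constants converge to those of the target. The ansatz for the matrix $(a_i^j(t))$ is found by matching the nilpotent filtrations of source and target and balancing the relevant powers of $t$; once it is guessed, the verification is mechanical.

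The main obstacle is the non-degeneration half: for every candidate $\mathbf{A}\in\mathcal{S}$ one must show that no other element of $\mathcal{S}$ degenerates to $\mathbf{A}$, and in particular that each of the six rigid algebras really is rigid. I would apply Lemma~\ref{main} systematically, exhibiting for each forbidden pair $(\mathbf{A},\mathbf{B})$ a Borel-stable Zariski-closed subset $\mathcal{R}\subset\mathbb{L}(T)$ that contains a representative of $\mathbf{A}$ but no representative of $\mathbf{B}$. For most pairs the coarse invariants --- $\dim\mathbf{A}^2$, $\dim\operatorname{Ann}(\mathbf{A})$, $\dim\mathbf{A}^3$, and the dimensions of derived powers of distinguished ideals --- already yield such an $\mathcal{R}$; where these invariants coincide, one has to design more refined Borel-invariants tailored to the specific pair, typically given by the rank of a bilinear map on a subspace picked out by the nilpotent filtration. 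Because the combined classification contains on the order of eighty algebras and several parametric families, the bookkeeping is considerable, and the close pairs require genuinely delicate invariants; once every required non-degeneration is certified, the decomposition of $\mathbb{L}(T)$ into exactly ten irreducible components of common dimension $24$ follows formally.
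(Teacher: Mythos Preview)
Your overall strategy --- compute derivation dimensions, exhibit explicit parametric bases for degenerations, and certify non-degenerations via Borel-stable closed sets as in Lemma~\ref{main} --- is exactly the approach the paper takes. However, your proposal contains two concrete errors that would derail the argument.

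First, you identify the two-parameter family contributing an irreducible component as $\cc^5_{13}(\af,\bt)$. This is wrong: the paper shows that the whole family $\cc^5_{13}(\af,\bt)$ lies in the orbit closure of the one-parameter family $\cc^5_{80}(*)$ via an explicit (and rather intricate) parametric basis and parametric index. The two-parameter family that actually gives a component is $\cc^5_{26}(\af,\bt)$, and the three one-parameter families are $\cc^5_{16}(\af)$, $\cc^5_{49}(\af)$, and $\cc^5_{80}(\af)$. If you proceeded with $\cc^5_{13}$ as a candidate, you would fail to cover many algebras (in the paper, $\cc^5_{26}(*,*)$ is the source of degenerations to roughly thirty algebras on the list), and you would be unable to separate $\cc^5_{13}$ from $\cc^5_{80}$ by any Borel-stable invariant, since no such separation exists.

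Second, your final sentence asserts that the ten irreducible components have ``common dimension $24$''. They do not: the orbit-closure dimensions are $24$ for $\cc^5_{49}(*)$, $23$ for $\cc^5_{26}(*,*)$, $22$ for each of ${\mathcal J}_{21}$, $\cc^5_{16}(*)$, $\cc^5_{69}$, $\cc^5_{72}$, $\cc^5_{80}(*)$, $\cc^5_{81}$, and only $21$ for $\cc^5_{76}$ and $\cc^5_{77}$. The variety is $24$-dimensional because that is the maximum, not because all components attain it. This matters for the non-degeneration step: several of the required separations (e.g.\ $\cc^5_{49}(*) \not\to \cc^5_{26}(*,*)$) are between a higher-dimensional family and a lower-dimensional one, so the derivation-dimension bound alone does not help and one genuinely needs the Borel-stable invariants the paper records.
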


\begin{proof}[{\bf Proof}]
Thanks to \cite{kama} the algebras 
 $\epsilon_1,$ ${\mathcal J}_{21},$ ${\mathcal J}_{22},$
 ${\mathcal J}_{27}(\varepsilon,\phi)$ and ${\mathcal J}_{40}$
 determine the irreducible components in the variety of complex 
 $5$-dimensional nilpotent Jordan algebras 
 (which is a proper subvariety of nilpotent commutative $\mathfrak{CD}$-algebras), 
 where
\begin{longtable}{lllllllllll}
$\epsilon_1$  &$ : $& $e_1e_1= e_2$ & $e_2e_2=e_4$& $e_1e_3=e_4$& $e_1e_2=e_3$ & $e_1e_4=e_5$ & $e_2e_3=e_5$\\

${\mathcal J}_{21}$ &$ : $ &  $e_1e_1=e_5$ & $e_1e_2=e_4$ & $e_2e_2=e_5$ & $e_3e_3=e_4$ & $e_3e_4=e_5$  \\

${\mathcal J}_{22}$ &$ : $ &  $e_1e_1=e_2$ & $e_1e_2=e_4$ & $e_1e_4=e_5$ & $e_2e_2=e_5$ & $e_3e_3=e_4$ \\

${\mathcal J}_{27}(\varepsilon,\phi)$ &$ : $ &  $e_1e_1=e_3$ & $e_1e_3=\phi e_5$& $e_1e_4=e_5$ & $e_2e_2=e_4$ & $e_2e_3=e_5$ & $e_2e_4=\varepsilon e_5$ \\

${\mathcal J}_{40}$ &$ : $ & $e_1e_1=e_5$ & $e_1e_2=e_3$ & $e_1e_3=e_4$ & $e_2e_2=e_4$ & $e_2e_3=e_5$

        \end{longtable}

Let us give the list of usefull degenerations:
{\tiny       
\begin{longtable}{l ll}
 \hline
 $\cc^5_{78}  \to \epsilon_1$ & 
 $E_1^t= t^2  e_1 $ & $E_2^t=t^4  e_2 $ \\ 
 $E_3^t= -t^8 e_5 $ & $E_4^t= i t^5 e_4$ & $E_5^t= t^4 e_3$\\

   \hline
 $\cc^5_{16}(t^{-2})  \to {\mathcal J}_{22}$ & 
\multicolumn{2}{l}{ $E_1^t= t^2( 1+ t^2) e_1 - \frac{t^6 (1 + t^2)^2}{3 + 2 t^2} e_2 - t^4 (1 + t^2) e_3 - \frac{t^{10} (1 + t^2)^2}{2 (3 + 2 t^2)^2} e_4 $}\\ 
& 
\multicolumn{2}{l}{  $E_2^t= t^4 (1 + t^2)^2 e_2 + t^4 (1 + t^2)  e_3 + \frac{t^8 (1 + t^2)^2}{3 + 2 t^2} e_4 $} \\ 
 $E_3^t= t^3 (1 + t^2)  e_3 $ & 
 $E_4^t= t^6 (1 + t^2)^2 e_4 $ & 
 $E_5^t= t^6 (1 + t^2)^4 e_5$\\

  \hline
 $\cc^5_{52}\left(\frac{\phi}{\sqrt{t}}\right)  \to {\mathcal J}_{27}(\varepsilon,\phi)$ & 
 $E_1^t= t^{\frac{5}{2}} e_1 - \varepsilon t^4 e_3 + t^3 (1 + \varepsilon t) e_4 $ \\& 
 $E_2^t= t^2 e_2 + \varepsilon t^3 e_4 $ & 
$E_3^t= t^5 e_3 - t^5 e_4 + t^6 (1 - 2 \varepsilon \phi - \varepsilon^2 t) e_5  $  \\& 
 $E_4^t= t^4 e_4 + \varepsilon^2 t^6 e_5$ & 
 $E_5^t= t^7 e_5$\\
 
  \hline
 $\cc^5_{26}(4 t^3, -3)  \to {\mathcal J}_{40}$ & 
 $E_1^t=-2 t e_1  $ & 
 $E_2^t= -2 t^2 e_2 + 2 t^2 e_3$ \\ 
 $E_3^t= 4 t^3 e_3 - 4 t^3 e_4 - 4 t^3 e_5 $ & 
 $E_4^t=-8 t^4 e_4 - 8 t^4 e_5$ & 
 $E_5^t= 16 t^5 e_5 $\\

 \hline
 $\cc^5_{13}(-1,0) \to \cc^5_{01}$ & 
 $E_1^t= t^{-1}  e_1 $ & $E_2^t=t^{-2}  e_2 $ \\ 
 $E_3^t= t^{-3} e_3+t^{-3} e_4+t^{-3} e_5 $ & $E_4^t= t^{-4} e_4+t^{-4} e_5$ & $E_5^t= t^{-5} e_5$\\

 \hline
 $\cc^5_{13}(-1-\A, 0) \to \cc^5_{02}(\A)$ & 
 $E_1^t=  e_1 - t e_4$ & $E_2^t= e_2 + t ^2 e_5  $ \\ 
 $E_3^t= e_3 $ & $E_4^t=- e_5 $ & $E_5^t= t e_4$\\

 \hline
 $\cc^5_{13}\left(\frac{1}{t-1},-\frac{\sqrt{t^3}}{2 \sqrt{t-1}} \right) \to \cc^5_{03}$ & 
 $E_1^t= \sqrt[4]{t-1} e_1$ &  $E_2^t=\sqrt{t - 1} e_2 + \sqrt{t^3}  e_4 $\\ 
 $E_3^t= \frac{\sqrt[4]{(t - 1)^3}}{t} e_3 $  & $E_4^t= e_5 $ & $E_5^t= \sqrt{t} e_4$\\

 \hline
 
 $\cc^5_{13}(-1,0 ) \to \cc^5_{04}$ &  
 $E_1^t= t e_1 $ & $E_2^t=t^2 e_2 $ \\ 
 $E_3^t= t e_3 + i (t^2) e_4   $ & $E_4^t= -t^4 e_5  $ & $E_5^t= i t^3 e_4$\\
 
  \hline
 $\cc^5_{26}(0,0) \to \cc^5_{05}$ &  $E_1^t=t  e_1 $ & $E_2^t= t^{-1} e_2 $ \\ 
 $E_3^t=e_3   $ & $E_4^t=  e_5  $ & $E_5^t= t^{-2} e_4$\\
 
   \hline
 $\cc^5_{26}(t^{-2}, t^{-2}) \to \cc^5_{06}$ & 
 $E_1^t= t^{-1} e_1$& $E_2^t=  t^{-1} e_2$ \\ 
 $E_3^t=t^{-2} e_3   $ & $E_4^t= t^{-4}  e_5  $ & $E_5^t=  t^{-4} e_4$\\
 
   \hline
 $\cc^5_{26}(4 t^{-2},0  ) \to \cc^5_{07}$ & $E_1^t= t^{3} e_1$ &  $E_2^t=  t^{-1} e_2 $ \\ 
 $E_3^t=t^2 e_3   $ & $E_4^t= t^4  e_5  $ & $E_5^t=   e_4 $\\

    \hline
 $\cc^5_{26}(0,1) \to \cc^5_{08}$ &  $E_1^t= t  e_1 - t e_2 - t e_3$ & $E_2^t= -2 t^2 e_3 $\\ 
 $E_3^t= -t^2 e_2 - t^2 e_3   $ & $E_4^t= t^3 e_4 + t^3 e_5  $ & $E_5^t=5 t^4 e_5 $\\

    \hline
 $\cc^5_{26}(5,4) \to \cc^5_{09}$ & $E_1^t=-t e_1 + t e_2 + t e_3    $ &  $E_2^t=-2 t^2 e_3 + 8 t^2 e_5$\\ 
 $E_3^t=t^2 e_2 $&  $E_4^t= t^3 e_4  $ & $E_5^t=4 t^4 e_5 $\\

 \hline
  $\cc^5_{26}(-1,0) \to \cc^5_{10}$ &$E_1^t=-t e_1 + t^{-1} e_2 + t e_3   $&$E_2^t= -2 e_3 + 2 e_4$\\ $E_3^t= e_2 $ & $E_4^t=(   2 t-2t^{-1}) e_4  $ & $E_5^t=4 e_5  $\\

 \hline
   $\cc^5_{13}\left(\frac{1}{t-1},0 \right) \to \cc^5_{11}$ &   
 $E_1^t=  e_1$ & $E_2^t=   e_2$\\ 
 $E_3^t= t^{-1} e_3 - \frac{\sqrt{t}}{\sqrt{t-1}} e_4  $  &$E_4^t= \frac{\sqrt{t^3}}{\sqrt{t-1}} e_4  $ & $E_5^t= \frac{1}{t-1}e_5$\\

 \hline
    $\cc^5_{13}(\A,t^{-\frac{1}{2}} ) \to \cc^5_{12}(\A)$ &  $E_1^t=  e_1$ & $E_2^t=   e_2$\\  
 $E_3^t=e_3 + t^{\frac{3}{2}} e_4  $ & $E_4^t= \sqrt{t} e_4  $ & $E_5^t= e_5$\\

   \hline
    $\cc^5_{80}\left(\frac{3(1+\A)}{\B\sqrt{\B^2-\A}-1-\B^2}\right) \to \cc^5_{13}(\A,\B)$ & 
\multicolumn{2}{l}{ $E_1^t= te_1+
\frac{3 t (1+\A) \left(2-\B^2+\B \sqrt{\B^2-\A}\right)}{4 \left(4+3\A+(2\A +5 )\B^2+2 \B^4-\left(5 +3\A +2 \B^2 \right)\B \sqrt{\B^2-\A}\right)}e_2$}\\
& \multicolumn{2}{l}{$+\frac{27 t \left(-2+\B^2-\B \sqrt{\B^2-\A}\right)^3 \left(1+\A\right)^2\sqrt{\B^2-\A} \left((2+4 \A) \B-2 \B^3-\left(2+3 \A-2 \B^2 \right)\sqrt{\B^2-\A}\right)}{128 \left(1+\B^2-\B\sqrt{\B^2-\A}\right)^4 \left(4+3 \A+\B^2-\B \sqrt{\B^2-\A}\right)^3}e_3$}\\
& \multicolumn{2}{l}{$-\frac{3 t \left(2-\B^2+\B \sqrt{\B^2-\A}\right)^2 (1+\A) \left(2+3 \A-\B^2+\B \sqrt{\B^2-\A}\right)\left(\sqrt{\B^2-\A}-\B\right)}{32 \left(4+3\A+(2\A +5 )\B^2+2 \B^4-\left(5 +3\A +2 \B^2 \right)\B \sqrt{\B^2-\A}\right)^2}e_4$}\\

& \multicolumn{2}{l}{$E_2^t=   t^2e_2-\frac{3 t^2\left(1+\A\right) \left(\sqrt{\B^2-\A}-\B\right)\left(-2+\B^2-\B\sqrt{\B^2-\A}\right)}{2 \left(1+\B^2-\B\sqrt{\B^2-\A}\right) \left(4+3 \A+B^2-\B \sqrt{\B^2-\A}\right)}e_4 $}\\
& \multicolumn{2}{l}{$E_3^t= \frac{9 t^2 (1+\A) \left(2 \B \left(5+\B^2\right) \left(-\B+\sqrt{\B^2-\A}\right)+\A \left(2-6 \B^2+7 \B \sqrt{\B^2-\A}\right)\right)}{4 \left(1+\B^2-\B \sqrt{\B^2-\A}\right)^2 \left(4+3 \A+\B^2-\B \sqrt{\B^2-\A}\right)}e_3+ $} \\
& \multicolumn{2}{l}{$t^2(\sqrt{\B^2-\A}-\B)e_4+\frac{9 t^2 \left(1+\A \right){\bf Q}}{32 \left(1+\B^2-\B \sqrt{-\A+\B}\right)^3 \left(4+3 \A+\B^2-\B \sqrt{\B^2-\A}\right)^2}e_5$}\\

&  \multicolumn{2}{l}{$E_4^t= -\frac{3 t^3 \left(1+\A\right)}{1+\B^2-\B\sqrt{\B^2-\A}}e_3+\frac{9 t^3 \left(\B-\sqrt{\B^2-\A}\right) (1+\A) \left(2 (1+\A) \B-\sqrt{\B^2-\A}\right)}{\left(1+\B^2-\B\sqrt{\B^2-\A}\right)^2 \left(4+3\A+\B^2-\B \sqrt{\B^2-\A}\right)}e_5  $} \\ 
&\multicolumn{2}{l}{$E_5^t=-\frac{3 t^4}{1+\B^2-\B\sqrt{\B^2-\A}}e_5$}\\
\multicolumn{3}{l}{${\bf Q} = -88\A-60\A^2+344 \B^2-204\A \B^2-558\A^2 \B^2-201\A^3 \B^2+1032 \B^4+966\A \B^4+233\A^2 \B^4+360 \B^6+232\A \B^6- 8\B^8 $}\\
\multicolumn{3}{l}{$ \sqrt{\B^2-\A} \left(-344 \B-312\A \B-60\A^2 \B-1032 \B^3-1146\A \B^3-346\A^2 \B^3-360 \B^5-228\A \B^5+8 \B^7\right)$}\\
 \hline
 
  $\cc^5_{26}((t^2-1)^2t^{-4}, t^{-4}) \to \cc^5_{14}$ & 
  $E_1^t= - e_1 +  e_2 +  e_3 $& 
  $E_2^t= -2 e_3 + 2(1 -  t^2)t^{-4} e_5 $\\ 
 $E_3^t= t e_2 + t^{-1} e_3 $ &$E_4^t=2 e_4 + 2t^{-2}     e_5  $ & $E_5^t= 4   e_5   $\\

 \hline
  $\cc^5_{26}(1+t^{-4}-\frac{2}{ t^2}-8 t, t^{-4}) \to \cc^5_{15}$ &      
   $E_1^t= t  e_1 - t e_2 - t e_3 $ & 
    $E_2^t= -2 t^2 e_3 - (2 - 2t^{-2} + 8 t^3) e_5 $\\   
  $E_3^t= -t^2 e_2 - e_3 $ 
  & $E_4^t= 2 t^2 e_4 + 2 e_5  $& $E_5^t= 4 t^4 e_5   $\\

 \hline
  $\cc^5_{26}\left(\frac{t^4-8 t^7+16 t^8-8 t^9}{(1-2 t)^2}, \frac{(t-1)^4}{(1-2 t)^2} \right) \to \cc^5_{17}$ & 
  \multicolumn{2}{l}{     $E_1^t= -\frac{(t-1)^2 t^3}{2 t-1} e_1 + \frac{(t-1 ) t^4}{2t-1}    e_2 + \frac{(t-1 )^2 t^4}{(1 - 2 t)^2} e_3  $} \\&
\multicolumn{2}{l}{ $E_2^t=\frac{2 ( 1- t)^3 t^7}{(1 - 2 t)^2} e_3 + \frac{2 (1 - t)^3 t^7}{(1 - 2 t)^3} e_4 - \frac{  2 (t-1 )^6 t^7 (4 t^6-1 - t )}{(1 - 2 t)^4} e_5 $}\\ 
   $E_3^t= \frac{(1 - t) t^5}{1 - 2 t} e_2 - \frac{(t-1)^3 t^5}{(1 - 2 t)^2} e_3 $  &
  $E_4^t= \frac{2 (t-1 )^4 t^{10}}{(1 - 2 t)^3} e_4 +  \frac{ 2 (1 - t)^6 t^{10}}{(1 - 2 t)^4}e_5 $& 
 $E_5^t= \frac{(4 (1 - t)^6 t^{14}}{(1 - 2 t)^4} e_5   $\\

 \hline
  $\cc^5_{26}(1,0) \to \cc^5_{18}$ & 
     $E_1^t=t^{-1} e_1 - t^{-1} e_2 - t^{-1} e_3   $ &  $E_2^t=-2t^{-2} e_3$\\
     $E_3^t=- e_2 $ &   $E_4^t=2t^{-2} e_4 $ & $E_5^t=4t^{-4} e_5  $\\

 \hline
  $\cc^5_{26} ( 1+4t^{-4},4t^{-4}) \to \cc^5_{19}$ & 
     $E_1^t=t^{-1} e_1 - t^{-1} e_2 - t^{-1} e_3   $ &
 $E_2^t=-2t^{-2} e_3 + 8t^{-6} e_5 $\\  
 $E_3^t=- e_2 $ & 
 $E_4^t=2t^{-2} e_4 $ & $E_5^t=4t^{-4} e_5  $\\

 \hline
  $\cc^5_{26} (1+8t^{-3},0 ) \to \cc^5_{20}$ & 
     $E_1^t=t^{-1} e_1 - t^{-1} e_2 - t^{-1} e_3   $&
 $E_2^t=-2t^{-2} e_3 + 8t^{-5} e_5 $\\ 
 $E_3^t=- e_2 $ &   $E_4^t=2t^{-2} e_4 $ & $E_5^t=4t^{-4} e_5  $\\

 \hline
  $\cc^5_{26} \left(\frac{4-8 t+t^2+2 t^4+t^6}{t^6},\frac{4+t^2}{t^6} \right) \to \cc^5_{21}$ & 
$E_1^t= t^{-2} e_1 - t^{-2} e_2 - t^{-2} e_3   $ \\&
$E_2^t=-2t^{-4} e_3 + 2t^{-4} e_4  + 2 (t-2)^2t^{-10} e_5$ & 
$E_3^t=-t^{-1} e_2 + t^{-3} e_3 $  \\&
  $E_4^t= 2 t^{-5} e_4 - 2 t^{-7}e_5 $& $E_5^t=4 t^{-8} e_5  $\\
 
  \hline
  $\cc^5_{26}(0,0) \to \cc^5_{22}$ & 
     $E_1^t= -\frac{1}{2} e_1 + \frac{1}{2} e_2    $& $E_2^t=-\frac{1}{2} e_3$\\ 
     $E_3^t= \frac{t}{2}e_2$  &  $E_4^t=  -\frac{t}{4} e_4$ & $E_5^t= \frac{1}{4} e_5  $\\
 
  \hline
  $\cc^5_{26}(t^{-2},t^{-2}) \to \cc^5_{23}$ & 
$E_1^t= -\frac{1}{2} e_1 + \frac{1}{2} e_2    $& $E_2^t=-\frac{1}{2} e_3 + \frac{1}{2 t^2} e_5$\\ $E_3^t= -\frac{t}{2}e_2$ &  $E_4^t=  \frac{t}{4} e_4$ & $E_5^t= \frac{1}{4} e_5  $\\
 
  \hline
$\cc^5_{26}((\A+2 t)t^{-2},\A t^{-2}) \to \cc^5_{24}(\A)$ & 
$E_1^t= -\frac{1}{2} e_1 + \frac{1}{2} e_2$& $E_2^t= -\frac{1}{2} e_3 + \frac{\A + t}{2 t^2} e_5$\\ $E_3^t= -\frac{t}{2}e_2$  & $E_4^t=  \frac{t}{4} e_4$ & $E_5^t= \frac{1}{4} e_5  $\\

  \hline
$\cc^5_{80} \left( \frac{1}{t-1}\right) \to \cc^5_{25}$ & 
$E_1^t= t e_1 + \frac{t}{8 - 4 t}e_2$\\& 
$E_2^t= t^2  e_2 + \frac{t^2}{4 - 2 t} e_3 + \frac{t^3}{16 (t-2)^2 (t-1)} e_4$ &
$E_3^t= t^3 e_3 - \frac{t^3 (2 + t)}{4 (t-2) (t-1)} e_4 + \frac{ t^3 (7 t-4)}{16 (t-2)^2 (t-1)} e_5$\\ 
& 
$E_4^t= \frac{t^4}{t-1} e_4 - \frac{t^5}{2 - 3 t + t^2} e_5 $ & 
$E_5^t= t^4 e_5  $\\

   \hline
$\cc^5_{26}(1+\A t^{-2},t^{-2}) \to \cc^5_{27}(\A)$ & 
$E_1^t= t^{-1} e_1 - t^{-1} e_3    $& $E_2^t= t^{-1} e_2 $\\ 
$E_3^t= t^{-2} e_3 - t^{-2} e_4$ &  $E_4^t=  t^{-3} e_4$ & $E_5^t= t^{-4}  e_5  $\\

   \hline
$\cc^5_{26}(1+4 t^{-2},0 ) \to \cc^5_{28}$ & 
$E_1^t= ( 2t^{-1}-1) e_1 +  e_2 + (t-2)t^{-1} e_3    $& 
$E_2^t= 2 t^{-1}  e_2 $\\ 
$E_3^t=(4 - 2 t)t^{-2} e_3 + 2 (t-2 )t^{-2} e_4 $ & 
$E_4^t=  (8 - 4 t)t^{-3} e_4  $ & $E_5^t=  4 (t-2)^2 t^{-4} e_5  $\\

    \hline
  $\cc^5_{26}(1,0) \to \cc^5_{29}$ & $E_1^t= t^{-1} e_1 - t^{-1} e_3   $& $E_2^t= t^{-1}  e_2 $\\ $E_3^t= t^{-2} e_3 - t^{-2} e_4 $ &  $E_4^t= t^{-3}  e_4  $ & $E_5^t=  t^{-4} e_5   $\\

     \hline
  $\cc^5_{26} \left(1+\A t^2+\frac{t^4}{4},\frac{t^4}{4}  \right) \to \cc^5_{30}(\A)$ & 
$E_1^t=-\frac{t^2}{2} e_1 +\frac{ t^2}{2} e_2 + \frac{t^2}{2} e_3   $&
 $E_2^t= t e_2 $\\ 
 $E_3^t=-\frac{t^3}{2} e_3 + \frac{t^3}{2} e_4 + \frac{t^7}{8} e_5 $ & 
 $E_4^t=-\frac{t^4}{2} e_4      $ & $E_5^t= \frac{t^6}{4} e_5   $\\

      \hline
$\cc^5_{26}(1+4 t^2,0) \to \cc^5_{31}$ & 
$E_1^t= -2 t^2 e_1 + 2 t^2 e_2 + 2 t^2  e_3  $&
 $E_2^t=2 t  e_2  $\\ 
 $E_3^t= -4 t^3 e_3 + 4 t^3 e_4 $ & 
 $E_4^t= -8 t^4 e_4  $ & $E_5^t= 16 t^6 e_5   $\\

       \hline
  $\cc^5_{26}(1,0) \to \cc^5_{32}$ & 
$E_1^t=-\frac{t^2}{2} e_1 + \frac{t^2}{2} e_2 + \frac{t^2}{2} e_3  $&
 $E_2^t=2 t  e_2  $\\ 
 $E_3^t= -\frac{t^3}{2} e_3 +\frac{ t^3}{2} e_4 $ & 
 $E_4^t= -\frac{t^4}{2} e_4  $ & $E_5^t= \frac{t^6}{4} e_5   $\\

        \hline
$\cc^5_{26}(2+t^2,t^2) \to \cc^5_{33}$ & 
$E_1^t=t  e_1 - t e_2 - t e_3  $&
 $E_2^t=  e_2 $\\ 
 $E_3^t=t e_3 - t  e_4 - t^3 e_5 $ & 
 $E_4^t= t e_4   $ & $E_5^t=t^2 e_5    $\\

        \hline
  $\cc^5_{26}(2,0) \to \cc^5_{34}$ & 
$E_1^t=t  e_1 - t e_2 - t e_3  $&

 $E_2^t=  e_2 $\\ 
 $E_3^t=t e_3 - t e_4 $ & 
 $E_4^t= t e_4   $ & $E_5^t=t^2 e_5    $\\

        \hline
  $\cc^5_{26}(2,1) \to \cc^5_{35}$ & 
$E_1^t= e_1 - e_2 - e_3  $&
$E_2^t= t^{-1} e_2 $\\ 
$E_3^t=t^{-1} e_3 - t^{-1} e_4-t^{-1}e_5 $ & 
 $E_4^t= t^{-2} e_4   $ & $E_5^t=t^{-2} e_5    $\\
 
     \hline
  $\cc^5_{26}(1,0) \to \cc^5_{36}$ &   
     $E_1^t= t e_1 - t e_2 - t e_3  $&
 $E_2^t=   e_2 $\\
$E_3^t=t e_3 - t  e_4 $ & 
 $E_4^t= t e_4   $ & $E_5^t=t^{2} e_5    $\\

     \hline
  $\cc^5_{26}(1,0) \to \cc^5_{37}$ & $E_1^t=-2 t^3 e_1 + 2 t^3 e_2 + 2 t^3 e_3  $&
 $E_2^t=  -2 t^2 e_2 + 2 t^4 e_3  $\\ 
 $E_3^t= 4 t^5 e_3 - 4 t^5 e_4 - 4 t^9 e_5 $ & 
 $E_4^t=-8 t^6 e_4 + 8 t^8 e_5   $ & $E_5^t= 16 t^{10} e_5    $\\

      \hline
  $\cc^5_{26}((1+t^2)^2 , t^4 ) \to \cc^5_{38}$ & 
$E_1^t= -2 t^3 e_1 + 2 t^3 e_2 + 2 t^3 e_3  $&
$E_2^t=  -2 t^2 e_2 + 2 t^4  e_3  $\\ 
$E_3^t=4 t^5 e_3 - 4 t^5 e_4 - 4 t^9 e_5 $ & 
 $E_4^t = -8 t^6 e_4 + 8 t^8 e_5   $ & $E_5^t= 16 t^{10} e_5    $\\
 
       \hline
$\cc^5_{26}(1,1) \to \cc^5_{39}$ & 
$E_1^t= -t e_1 + t  e_2 + t  e_3   $& $E_2^t=   e_2 +   e_3   $\\ 
$E_3^t= -t e_3 + t e_4 + t  e_5$ & $E_4^t =2 e_4 + 2 e_5   $ & $E_5^t= t^2 e_5    $\\

        \hline
$\cc^5_{26}(0,1) \to \cc^5_{40}$ & 
$E_1^t= -t e_1 + t  e_2 + t  e_3   $& $E_2^t=   e_2 +   e_3   $\\ 
$E_3^t= -t e_3 + t e_4 + t  e_5$ & 
 $E_4^t =2 e_4 + 2 e_5   $ & $E_5^t= t^2 e_5    $\\

        \hline
$\cc^5_{56}(1+t^3) \to \cc^5_{41}$ & 
$E_1^t= t e_1 $& $E_2^t= t^2 e_4 + t e_5    $\\ 
$E_3^t= -t e_3 + t  e_4 + e_5$ & 
$E_4^t =t^2  e_2 - e_3 +  e_4   $ & $E_5^t=t^4 e_5    $\\

     \hline
$\cc^5_{56}(2) \to \cc^5_{42}$ & 
$E_1^t=\sqrt{t} e_1 + i \sqrt{t} e_2   $& 
$E_2^t= t e_3 - t  e_4    $\\ 
$E_3^t=i t e_2$ & 
$E_4^t = -t e_4 - t^{\frac{3}{2}} e_5   $ & $E_5^t= t^2 e_5    $\\

     \hline
$\cc^5_{52}(i ) \to \cc^5_{43}$ & 
$E_1^t= it^{-2} e_1 + t^{-2} e_2     $& 
$E_2^t=  -t^{-1} e_2 + \frac{1}{2t^3} e_3 - \frac{1 + 2 t }{2 t^3} e_4   $\\ 
$E_3^t= -t^{-3} e_4  $ & 
 $E_4^t = -t^{-5} e_3 + t^{-5} e_4 - t^{-7} e_5 $ & 
 $E_5^t=  t^{-6} e_5   $\\

    \hline
$\cc^5_{49}(1+t^4 ) \to \cc^5_{44}$ & 
$E_1^t=-(-1)^{\frac{3}{4}} e_1 + (-1)^{\frac{1}{4}} e_2   $& 
$E_2^t=  (-1)^{\frac{1}{4}} t  e_2   $\\ 
$E_3^t= i t e_4 + t e_5 $ & 
$E_4^t = it^{-1} e_3 - it^{-1} e_4 - (2 + t^2)t^{-1} e_5 $ & 
$E_5^t= -t^2 e_5  $\\

   \hline
$\cc^5_{56}\left(\frac{1+2 t^2-2 t^3+2 t^4-2 t^5}{(1+t^2-t^3)^2}\right) \to \cc^5_{45}$ & 
$E_1^t=  t \sqrt{t^3 - t^2 -1 } e_1 + t e_2    $& 
$E_2^t=  t^2 e_2    $\\  
$E_3^t= t^3 e_4$ & 
$E_4^t =  t (t^3 - t^2 -1 ) e_3 + (t + t^3)  e_4 - t^5 e_5   $ & $E_5^t= t^6 e_5    $\\

     \hline
$\cc^5_{52}(i (t-1) ) \to \cc^5_{46}$ & 
$E_1^t= (t^3-1)^{\frac{3}{2}}t^{-2} e_1 + (t^{-2} - t) e_2      $& 
$E_2^t=  (t^{-1} - t^2) e_2  $\\ 
$E_3^t=(-2 + t^{-3} + t^3) e_4$ & 
 $E_4^t =(1 - t^3)^3t^{-5} e_3 - (1 - t^3)^2)t^{-5} e_4   $ & $E_5^t= (1 - t^3)^4t^{-6} e_5   $\\

     \hline
$\cc^5_{52}\left(\frac{1}{\sqrt{t^3-1}}\right) \to \cc^5_{47}$ & 
$E_1^t= t^{-2} e_1 + it^{-2} e_2 - it^{-4} e_3 + i t^{-4} e_4        $
& 
$E_2^t=  it^{-1} e_2  - t^{-2} e_4    $ \\ 
$E_3^t=-t^{-3} e_4 + t^{-5} e_5$ & 
 $E_4^t =  -it^{-5} e_3 + it^{-5} e_4 - 2 i t^{-6} e_5  $ & $E_5^t= t^{-6} e_5  $\\

     \hline
$\cc^5_{56}(1+t^4) \to \cc^5_{48}$ & 
 $E_1^t= i t e_1 + t e_2        $& $E_2^t=  t^2 e_2    $ \\ 
$E_3^t=  t^3 e_4$ & 
 $E_4^t =    -t e_3 + t  e_4 $ & $E_5^t= t^6 e_5 $\\
 
  \hline
$\cc^5_{49}(1+t^{-4}) \to \cc^5_{50}$ & 
\multicolumn{2}{l}{ $E_1^t=(-1)^{\frac{3}{4}} t^{\frac{3}{2}} e_1 + (-1)^{\frac{1}{4}} t^{\frac{3}{2}} e_2 - i t^4 e_3 + 
 i t^4 e_4        $}\\
& $E_2^t= (-1)^{\frac{1}{4}} t^{\frac{1}{2}} e_2 - i t^4 e_3 + i t^4 e_4      $ &
$E_3^t= -i t^3 e_3 + i t^3 e_4 $ \\
& 
 $E_4^t =   i t e_4 - t^3 e_5 $ & $E_5^t= -t^2 e_5 $\\

     \hline
$\cc^5_{49}(0) \to \cc^5_{51}$ & 
 $E_1^t= -t^{-1} e_1        $& $E_2^t= -t e_2    $ \\ 
$E_3^t=  t^{-2} e_3$ & 
 $E_4^t =     t^2  e_4 $ & $E_5^t=  e_5 $\\
 
     \hline
$\cc^5_{49}(1+t) \to \cc^5_{52}(\A)$ & 
 $E_1^t= -t^{-1} e_1        $& $E_2^t= -t e_2    $ \\ 
$E_3^t=  t^{-2} e_3$ & 
 $E_4^t =     t^2  e_4 $ & $E_5^t=  e_5 $\\

        \hline
  $\cc^5_{52}(t^{-1}) \to \cc^5_{53}$ & 
\multicolumn{2}{l}{$E_1^t=-\sqrt{\frac{t}{\A - t}} e_1 + \sqrt{\frac{t^3}{\A - t}} e_2 + 
 \frac{\A}{\A - t} e_3 - \frac{\A}{\A - t} e_4    $}\\
 & 
 $E_2^t=  \sqrt{\frac{t}{\A - t}} e_2 + \frac{1}{\A - t} e_3 + \frac{1}{t-\A} e_4  $ & 
$E_3^t= \frac{t}{\A - t} e_3 + t e_5 $ \\ &  
$E_4^t = \frac{t}{\A - t} e_4 + \frac{t}{(\A - t)^2} e_5 $ & 
$E_5^t= \frac{t^2}{(\A - t)^2} e_5   $\\

        \hline
  $\cc^5_{52}(t^{-4}) \to \cc^5_{54}$ & $E_1^t=   e_1   $&$E_2^t=  t^{-1}  e_2  $\\ 
  $E_3^t=  e_3 $ & $E_4^t = t^{-2} e_4  $ & $E_5^t= t^{-4}  e_5   $\\
 
        \hline
$\cc^5_{52}(0) \to \cc^5_{55}$ & 
$E_1^t= t^3  e_1   $& $E_2^t= t^2 e_2 + t^5 e_4  $\\ 
$E_3^t= t^6 e_3 - t^6 e_4 - t^{12} e_5 $ & 
 $E_4^t = t^4 e_4 + t^{10} e_5 $ & $E_5^t= t^8 e_5   $\\
  
      \hline
$\cc^5_{49}(\A) \to \cc^5_{56}(\A)$ & 
$E_1^t= t^{-1}  e_1   $& $E_2^t= t^{-1}e_2  $\\ 
$E_3^t= t^{-2} e_3$ & 
 $E_4^t = t^{-2} e_4 $ & $E_5^t= t^{-4} e_5   $\\
  
    \hline
$\cc^5_{56}(1+t^{-4}) \to \cc^5_{57}$ & 
$E_1^t=   t e_1 + i t e_2   $& $E_2^t= e_2 $\\ 
$E_3^t=   t^2 e_3 - t^2  e_4$ & 
 $E_4^t =e_4 $ & $E_5^t=  e_5   $\\

     \hline
$\cc^5_{56}(0) \to \cc^5_{58}$ & 
$E_1^t=   t e_1   $& $E_2^t= t^2 e_2 $\\ 
$E_3^t=   t^2 e_3 $ & 
 $E_4^t = t^4e_4 $ & $E_5^t=  t^6 e_5   $\\

         \hline
  $\cc^5_{52}(2i) \to \cc^5_{59}$ & 
$E_1^t= i t^{-2} e_1 + t^{-2} e_2 - t^{-6} e_3 + t^{-6} e_4   $ & $E_2^t= t^{-1} e_2 $\\ 
$E_3^t=-t^{-4} e_3 + t^{-4} e_4 + 2t^{-8} e_5 $ & 
 $E_4^t =t^{-3} e_4 - t^{-7}    e_5  $ & $E_5^t= t^{-6} e_5    $\\

       \hline
$\cc^5_{52} \left(\frac{t -1}{\sqrt{t^2-1}} \right) \to \cc^5_{60}$ & 
$E_1^t=  (t^2 - 1)^{\frac{3}{2}} t^{-1} e_1 + (t - t^{-1}) e_2  $
&
 $E_2^t=  ( t^2 - 1) e_2 $ \\
 $E_3^t= ( t^2 - 1)^3 t^{-2} e_3 + (t^{-1} - t)^2 e_4 $  
 &
 $E_4^t = (t^{-1} - 2 t + t^3) e_4      $ & 
 $E_5^t= (t^2 - 1)^4 t^{-2} e_5    $\\
  
       \hline
$\cc^5_{52}(0) \to \cc^5_{61 }$ & 
$E_1^t= -i t^{-2} e_1 - t^{-2}     e_2   $&
 $E_2^t=  -t^{-1} e_2$\\ 
 $E_3^t= -t^{-4} e_3 + t^{-4} e_4   $ & 
 $E_4^t = t^{-3} e_4  $ & 
 $E_5^t= t^{-6} e_5    $\\
  
    \hline
$\cc^5_{52}\left( -\frac{8+t}{t} \right) \to \cc^5_{62 }(\A)$  
&\multicolumn{2}{l}{     $E_1^t= 2 e_1 + 2 e_2 + 2 (4 - 4 \A + t) t^{-2} e_3 + ( 8 \A-8 + 6 t)  t^{-2} e_4  $}\\

&$E_2^t= 4 t  e_2 + 2 e_3 - 2  e_4 $& 
 $E_3^t=4 e_3 + 4 e_4 + 256 (\A-1)t^{-3} e_5   $ \\& 
 $E_4^t =8 t e_4 + (8 - 32 \A t^{-1})  e_5  $ & 
 $E_5^t= 64 e_5    $\\

   \hline
  $\cc^5_{49}(1-t^{-4}) \to \cc^5_{63 }$ & 
    $E_1^t= i t^2 e_1 + i t e_2 + t^6 e_3 - t^6 e_4$
&
 $E_2^t= i t^2 e_2 - t^6 e_3 + t^6 e_4$\\ 
&\multicolumn{2}{l}{ $E_3^t= -t^4 e_3 + t^2 ( t^2-1) e_4 + t^3 (t^2-2) e_5   $}\\  
& $E_4^t=-t^3 e_4 - t^4 (1 + t^3) e_5  $ & 
 $E_5^t= t^5 e_5    $\\ 
  
     \hline
\multicolumn{2}{l}{ $\cc^5_{49}(1+t^2-5 t^4+10 t^6-10 t^8+5 t^{10}-t^{12}) \to \cc^5_{64 }$} & 
$E_1^t= -\frac{(-1)^{\frac{3}{4}}}{t^2-1} e_1 + (-1)^{\frac{1}{4}} e_2$\\
&
\multicolumn{2}{l}{ $E_2^t=(-1)^{\frac{1}{4}} t e_2 + \frac{i}{( t^2-1)^3} e_3 - \frac{i}{(t^2-1)^3} e_4  $}\\ 
&\multicolumn{2}{l}{ $E_3^t= -\frac{i}{(t^2-1)^2} e_3 + 
 i\left(t^{-2} + \frac{1}{(t^2-1)^{2}}\right) e_4 + \frac{2 - t^2}{3 t^2-1} e_5    $}\\  
& $E_4^t= it e_4 + \frac{t}{t^2-1} e_5 $ & 
 $E_5^t=-t^2 e_5    $\\ 
  
      \hline
  $\cc^5_{52}\left(\frac{1}{\sqrt{t^2-1}}\right) \to \cc^5_{65 }$ & 
    $E_1^t=-(t^2 - 1)^{\frac{3}{2}} t^{-1} e_1 + ( t-t{-1}) e_2 $\\
&
 $E_2^t= (   t^2-1) e_2$&
 $E_3^t= ( t^2-1)^3t^{-2}e_3 + ( t^{-2} -2 + t^2) e_4  $ \\ 
& $E_4^t= (t^{-1} - 2 t + t^3) e_4  $ & 
 $E_5^t= (  t^2-1)^4 t^{-2} e_5    $\\

        \hline
  $\cc^5_{52}( t^{-1} ) \to \cc^5_{66 }$ & 
 $E_1^t= t^5 e_1 - t^4 e_2$&
 $E_2^t=-t^5 e_2 + \frac{t^9}{2} e_3 + (t^7 - \frac{t^9}{2}) e_4 $\\ 
 $E_3^t= t^{10} e_3 + ( t^8 - t^{10}) e_4 $ &
 $E_4^t=t^9 e_4 $ & 
 $E_5^t=  t^{16} e_5    $\\
 
       \hline
  $\cc^5_{64}  \to \cc^5_{67}$ & 
 $E_1^t= t^{-1} e_1   $&
 $E_2^t= t^{-1} e_2  $\\
 $E_3^t= t^{-2} e_3  $ &
 $E_4^t=   t^{-2} e_4  $ & 
 $E_5^t=  t^{-4} e_5   $\\

          \hline
  $\cc^5_{56}( -t^{-4} ) \to \cc^5_{68 }$ & 
 $E_1^t= t e_1 + e_2  $&
 $E_2^t= t e_2  $\\
 $E_3^t= t^2 e_3 + (1 - t^2) e_4 $ &
 $E_4^t=   t e_4  $ & 
 $E_5^t=  t e_5   $\\

        \hline
$\cc^5_{69}\left(\frac{1+2 t^2+t^3}{t^4}\right) \to \cc^5_{70}$ & 
$E_1^t= t^{-1} e_1 + t^{-2} e_3 + t^{-1} e_4$&
$E_2^t=   t  e_2  $\\ 
$E_3^t=t^{-3} e_3 $ & 
$E_4^t =t^{-2} e_4 + (2 + t)t^{-3} e_5   $ & $E_5^t= t^{-4} e_5    $\\

        \hline
$\cc^5_{69}\left(\frac{1+t^3}{2 \sqrt[3]{2} t^3} \right) \to \cc^5_{71}$ & 
$E_1^t=\frac{1}{\sqrt[3]{2}t} e_1 + \frac{1}{2 t} e_3    $&
$E_2^t=  \sqrt[3]{2}   e_2 + \frac{1}{2} e_3  $\\ 
$E_3^t=\frac{1}{2 t^2} e_3 $ & 
 $E_4^t = \frac{1}{\sqrt[3]{2}t^2} e_4 + \frac{1}{\sqrt[3]{2} t^2} e_5   $ & $E_5^t= \frac{1}{2 \sqrt[3]{2}  t^4} e_5   $\\

        \hline
  $\cc^5_{69}(0) \to \cc^5_{73}$ & 
     $E_1^t= t^{-1} e_1 + t e_3   $&

 $E_2^t=  t^{-2} e_2 + t e_3  $\\ 
 $E_3^t=  e_3  $ & 
 $E_4^t =t^{-2}e_4     $ & $E_5^t= t^{-4} e_5   $\\
 
  \hline
  $\cc^5_{72}  \to \cc^5_{74}$ & 
     $E_1^t= e_1 + e_3   $&

 $E_2^t= t  e_2  $\\ 
 $E_3^t=  t^{-1} e_3  $ & 
 $E_4^t =   e_4    $ & $E_5^t=  e_5   $\\

        \hline
  $\cc^5_{74}  \to \cc^5_{75}$ & 
     $E_1^t=t^{-1} e_1 +  e_3   $&

 $E_2^t= e_2  $\\ 
 $E_3^t=  t^{-2} e_3  $ & 
 $E_4^t = t^{-2} e_4    $ & $E_5^t= t^{-4} e_5   $\\

   \hline
  $\cc^5_{80}(t^{-1})  \to \cc^5_{78}$ & 
     $E_1^t=2 (t-1) t e_1 + (1 - t) t e_2   $&\\

&\multicolumn{2}{l}{  $E_2^t=4 (t-1)^2 t^2 e_2 - 4 (t-1)^2 t^2 e_3 + (t-1)^2 t (1 + t) e_4 $}\\ 
&\multicolumn{2}{l}{ $E_3^t= 8 (t-1)^3 t^3 e_3 - 4 (t-1)^3 t^2 (3 + t) e_4 + 2 (t-1)^3 t^2 (3 + 7 t)  e_5$}\\ 
&  $E_4^t = 16 (t-1)^4 t^3 e_4 - 32 (t-1)^4 t^3 (1 + t)  e_5   $ & 
 $E_5^t= (t-1)^5 t^4 e_5  $\\

  \hline
    $\cc^5_{80}\left(\frac{3}{3+t}\right) \to \cc^5_{79}$ & 
\multicolumn{2}{l}{ $E_1^t= 4 t (3 + t) e_1 - 36 t (3 + t)  e_2 - 54 t (3 + t) (6 + t) e_3$}\\

& \multicolumn{2}{l}{$E_2^t=   16 t^2 (3 + t)^2 e_2 - 288 t^2 (3 + t)^2 e_3 + 
 11664 t^2 (3 + t) (24 + 10 t + t^2)  e_5 $}\\

& \multicolumn{2}{l}{$E_3^t= 64 t^3 (3 + t)^3 e_3 - 576 t^3 (3 + t)^2 (12 + t) e_4 - 
 2592 t^3 (3 + t)^2 (-24 - 2 t + t^2) e_5  $} \\
&  \multicolumn{2}{l}{$E_4^t= 768 t^4 (3 + t)^3 e_4 - 9216 t^4 (3 + t)^3 (6 + t) e_5  $} \\ 
&\multicolumn{2}{l}{$E_5^t=3072 t^5 (3 + t)^4 e_5$}\\
 
 \hline

\end{longtable}
}

By calculation of dimension of derivation algebra, we have dimensions ($\gdim$) of algebraic varieties  
defined by the following algebras:
\begin{longtable}{l}
$\gdim \ \cc_{49}^5(\alpha) = 24$\\
$\gdim \ \cc_{26}^5(\alpha,\beta) = 23$\\
$\gdim \ {\mathcal J}_{21}=\gdim \  \cc_{16}^5(\alpha)= \gdim \ \cc_{69}^5=\gdim \ \cc_{72}^5= \gdim \  \cc_{80}^5(\alpha)= \gdim \ \cc_{81}^5= 22$\\
$\gdim \ \cc_{76} = \gdim \ \cc_{77} =21$\\
\end{longtable}

Thanks to list of non-degeneration arguments presented below:
\begin{longtable}{|rcl|l|}
\hline 
\multicolumn{3}{|c|}{\textrm{Non-degeneration}} & \multicolumn{1}{|c|}{\textrm{Arguments}}\\
\hline
\hline

 $\cc_{26}^5(\alpha,\beta)$  & $\not \to$&   
 $\begin{array}{l} 
  \cc_{16}^5(\A),   \cc_{69}^5, \cc_{72}^5,  \cc_{76}^5, \\ 
  \cc_{77}^5,     \cc_{80}^5(\A) ,\cc_{81}^5, {\mathcal J}_{21}\end{array} $

 &  
${\mathcal R}=
\left\{ 
\begin{array}{l} A_1A_4=0 \end{array}  \right\} $
 \\
\hline
 
$\cc_{49}^5(\alpha)
 $  & $\not \to$&  
 
  $\begin{array}{l} 
  \cc_{16}^5(\A),   \cc_{26}^5(\A,\B), \cc_{69}^5, \cc_{72}^5,  \cc_{76}^5, \\ 
  \cc_{77}^5,     \cc_{80}^5(\A) ,\cc_{81}^5, {\mathcal J}_{21}\end{array} $

 &  
${\mathcal R}=
\left\{ 
\begin{array}{l} 
A_1^2 \subseteq A_3, A_1A_2 \subseteq A_4, \\
A_1A_3 \subseteq A_5,  
A_1A_5 \subseteq 0 
\end{array}  \right\} $

 \\
\hline

$\cc_{16}^5(\alpha)
 $  & $\not \to$&  
 
  $\begin{array}{l} 
   \cc_{76}^5,  
  \cc_{77}^5     \end{array} $ &  
${\mathcal R}=
\left\{ 
\begin{array}{l} 
A_1^2 \subseteq A_3, \ 
A_1A_2 \subseteq A_4, \
A_1A_3 \subseteq A_5  \end{array}  \right\} $ \\
\hline

$\cc_{80}^5(\alpha), \cc_{81}^5
 $  & $\not \to$&  
 
  $\begin{array}{l} 
   \cc_{76}^5,  
  \cc_{77}^5     \end{array} $ &  
${\mathcal R}=
\left\{ 
\begin{array}{l} 
A_3^2=0  \end{array}  \right\} $ \\
\hline

$\cc_{72}^5, \cc_{69}^5
 $  & $\not \to$&  
 
  $\begin{array}{l} 
   \cc_{76}^5,  
  \cc_{77}^5     \end{array} $ &  
${\mathcal R}=
\left\{ 
\begin{array}{l} 
A_1^2 \subseteq A_4  \end{array}  \right\} $ \\
\hline

${\mathcal J}_{21} 
 $  & $\not \to$&  
 
  $\begin{array}{l} 
   \cc_{76}^5,  
  \cc_{77}^5     \end{array} $ &  
${\mathcal R}=
\left\{ 
\begin{array}{l} 
{\mathcal J}_{21} \mbox{ is Jordan}  \end{array}  \right\} $ \\
\hline

\end{longtable}

we have that algebras 
\[ \Omega=\{  {\mathcal J}_{21}, \cc_{16}^5(\alpha),  \cc_{26}^5(\alpha,\beta),  \cc_{49}^5, \cc_{69}^5, \cc_{72}^5,  \cc_{76}, \cc_{77},  \cc_{80}^5(\alpha), \cc_{81}^5\}\]
give irreducible components.
\end{proof}

\end{document}